\begin{document}
%
\title{Decentralized Constraint-Coupled Optimization with Inexact Oracle}
%
%
%
%
%
\author{Jingwang Li and Housheng Su
  \thanks{Email: jingwangli@outlook.com, houshengsu@gmail.com.}
}

\maketitle

\begin{abstract}
  We propose an inexact decentralized dual gradient tracking method (iDDGT) for decentralized optimization problems with a globally coupled equality constraint. Unlike existing algorithms that rely on either the exact dual gradient or an inexact one obtained through single-step gradient descent, iDDGT introduces a new approach: utilizing an inexact dual gradient with controllable levels of inexactness. Numerical experiments demonstrate that iDDGT achieves significantly higher computational efficiency compared to state-of-the-art methods. Furthermore, it is proved that iDDGT can achieve linear convergence over directed graphs without imposing any conditions on the constraint matrix. This expands its applicability beyond existing algorithms that require the constraint matrix to have full row rank and undirected graphs for achieving linear convergence.
\end{abstract}

\begin{IEEEkeywords}
  Constraint-coupled optimization, dual gradient tracking, inexact oracle, linear convergence.
\end{IEEEkeywords}

%
\IEEEpeerreviewmaketitle

\section{Introduction}
\label{intro}
Recently decentralized optimization has gained significant popularity in numerous fields due to its promising applications in areas such as large-scale machine learning, distributed control, decentralized estimation, smart grids, and more \cite{yang2019survey,gabrielli2023survey,chen2021distributed,wang2022distributed,zhai2022distributionally,chen2023distributed,gao2021event}.
This work focuses on addressing the decentralized optimization problem
\begin{equation} \label{original_pro} \tag{P1}
  \begin{aligned}
    \min_{x_i \in \mR^{d_i}} \  & \sum_{i=1}^n f_i(x_i)    \\
    \text{s.t.} \               & \sum_{i=1}^{n}A_ix_i = b
  \end{aligned}
\end{equation}
over a directed network consisting of $n$ agents, where $f_i:\mR^{d_i} \rightarrow \mR$ and $A_i \in \mR^{p \times d_i}$ are completely private for agent $i$ and cannot be shared with its neighbors, while $b \in \mR^{p}$ is public for all agents. Without loss of generality, assume that there exist at least one finite solution of \cref{original_pro}. The constraint $\sum_{i=1}^{n}A_ix_i = b$ couples the decision variables of all agents, making it a decentralized constraint-coupled optimization problem \cite{falsone2020tracking,li2022implicit}. Notably, \cref{original_pro} can conver lots of practical optimization problems, such as distributed resource allocation \cite{falsone2020tracking,zhang2020distributed} and decentralized vertical fedrated learning \cite{chang2014multi,li2022nested}.

One can observes that the dual of \cref{original_pro} has the same form with the classical decentralized unconstrained optimizaton (DUO) problem, leading to the natural idea of applying existing DUO algorithms to its dual. This approach has been adopted in numerous previous works \cite{chang2014multi,yi2016initialization,falsone2017dual,alghunaim2019proximal,falsone2020tracking,zhang2020distributed,li2022implicit,alghunaim2021dual,li2022nested,lu2022decentralized,wu2022distributed,su2021distributed,falsone2023augmented}. However, a key challenge lies in dealing with the dual gradient or dual subgradient. A straightforward approach is to use the exact dual gradient and follow the same steps as in DUO algorithms. This involves applying a suitable DUO algorithm to the dual of \cref{original_pro}, resulting in a decentralized algorithm for \cref{original_pro}, and the resulting algorithm is essentially a special case of the original DUO algorithm. The only thing you need to do is to use a gradient based DUO algorithm if the dual funtion is differentiable, and a subgradient based one if the dual function is non-differentiable. Related works include \cite{chang2014multi,falsone2017dual,falsone2020tracking,zhang2020distributed,wu2022distributed,falsone2023augmented}. Despite its convenience, the aforementioned approach has a common drawback: the use of the exact dual gradient necessitates solving a subproblem exactly at each iteration. This can be computationally expensive and even infeasible in practice, particularly when dealing with nonlinear objective functions \cite{devolder2014first}.

A simple and widely adopted solution to address the above limitation is to use an inexact dual gradient instead of the exact one, which has been extensively explored in existing works  \cite{chang2014multi,yi2016initialization,alghunaim2019proximal,li2022implicit,alghunaim2021dual,li2022nested,lu2022decentralized,su2021distributed}. In these works, a common approach to obtaining the inexact dual gradient is to employ single-step (proximal) gradient descent, which leads to an approximate solution of the subproblem. This approach can be viewed as minimizing the first-order approximation of the objective function. By introducing this approximation, the resulting algorithms do not require solving the subproblem exactly at each iteration, making them computationally feasible and easy to implement.
However, there are some concerns regarding the aforementioned approximate method:
\begin{enumerate}
  \item The suboptimality of the approximate solution obtained through single-step gradient descent is uncontrollable \footnote{Certainly, we can control the suboptimality within a certain range by adjusting the step size of single-step gradient descent. However, when we say the suboptimality is "uncontrollable," we mean that we cannot make the suboptimality arbitrarily small, regardless of the step size chosen.}, this implies that we are unable to control the gap between the inexact dual gradient and the exact one, which is crucial for the algorithm's performance. As a result, the ability to control and optimize the overall performance of the algorithm may be significantly limited.
  \item There are multiple methods available for solving the subproblem, such as multi-step gradient descent, Nesterov's accelerated gradient descent (AGD) \cite{nesterov2018lectures}, Newton's method, and others. Relying solely on single-step gradient descent is overly inflexible. Intuitively, incorporating AGD or even second-order methods could potentially enhance the overall performance of the algorithm.
  \item The computation cost and communication cost vary widely across different decentralized optimization scenarios. In some scenarios, computation is cheap while communication is costly, whereas the opposite is true for others. Intuitively, when computation is cheap but communication is expensive, utilizing a more accurate dual gradient (which requires more computation steps to solve the subproblem) may lead to a decrease in the total convergence time. However, this strategy is impractical for algorithms based on single-step gradient descent because we cannot arbitrarily control the accuracy of the inexact dual gradient.
\end{enumerate}
Therefore, we aim to develop a new scheme that can address the aforementioned potential concerns.

Besides, we are also interested in addressing another open problem: Can we design an algorithm that can linearly solve \cref{original_pro} under a less restrictive condition on $A_i$? Currently, the weakest condition obtained in \cite{li2022implicit,alghunaim2021dual,li2022nested} is that $A = [A_1, \cdots, A_n]$ has full row rank. Although this condition is much weaker compared to its predecessors, such as $A_i$ is the identity matrix \cite{zhang2020distributed} or $A_i$ has full row rank \cite{chang2014multi}, it is still too strong to be satisfied by some practical optimization problems. For instance, in the vertical federated learning setting for regression problems \cite{li2022nested}, where $A$ represents the feature matrix with each row corresponding to a sample and $A_i$ represents the local feature matrix of agent $i$, the number of samples is typically much larger than the number of features. As a result, $A$ fails to satisfy the full row rank condition. Hence, there is a need for algorithms that can achieve linear convergence under a weaker condition on $A_i$.

The major contributions of this work are summarized as follows.
\begin{enumerate}
  \item We propose iDDGT, a novel inexact decentralized dual gradient tracking method. Unlike existing algorithms that rely on either the exact dual gradient or an inexact one obtained through single-step gradient descent, iDDGT introduces a new approach: utilizing an inexact dual gradient
        with controllable levels of inexactness. Specifically, in iDDGT, the subproblem is approximately solved with a predefined accuracy during each iteration to regulate the level of inexactness in the dual gradient. It is proved that iDDGT can achieve linear convergence if the error in solving the subproblem decreases linearly.
  \item Thanks to the new approach for handling the dual gradient, iDDGT offers two significant advantages. Firstly, the inexactness of the dual gradient in each iteration can be controlled arbitrarily. This allows for adjusting the algorithm's overall performance by modifying the level of inexactness in different iterations. Secondly, the choice of the subproblem solver is flexible, enabling the utilization of accelerated or second-order methods to enhance the algorithm's overall performance. In numerical experiments, we compare the performances of iDDGT and NPGA, which is considered state-of-the-art \cite{li2022nested}. The results demonstrate that iDDGT achieves a significantly faster convergence speed in terms of the number of gradient steps compared to NPGA. Therefore, when computation is expensive but communication is cheap, iDDGT would be a preferable choice.
  \item A consequence of the above two advantages of iDDGT is that we can obtain multiple versions of iDDGT by choosing different subproblem solvers (such as single-step gradient descent, multi-step gradient descent, and AGD) and different strategies to control the level of inexactness in the dual gradient during different iterations. We compares the performances of different versions of iDDGT and observe some important facts. Firstly, it is an incredibly counterintuitive fact that using the exact gradient results in significantly lower computational and communication efficiencies, as measured by the number of gradient steps and communication rounds required to achieve a certain level of accuracy, compared to using an inexact dual gradient. Secondly, accelerating the reduction of subproblem solving errors within a certain range can enhance communication efficiency. However, it may also lead to a potential decrease in computational efficiency. Thirdly, employing single-step gradient descent as the subproblem solver can yield favorable communication efficiency. However, the computational efficiency is significantly lower compared to the strategy of linearly reducing the error in solving the subproblem.
  \item As mentioned earlier, some existing algorithms such as IDEA \cite{li2022nested}, DCPA \cite{alghunaim2021dual}, and NPGA can linearly solve \cref{original_pro} under the condition that $A$ has full row rank, which was the weakest condition prior to the introduction of iDDGT. However, iDDGT achieves linear convergence without imposing any conditions on $A_i$ or $A$, significantly expanding its applicability. Furthermore, the linear convergence of IDEA, DCPA, and NPGA (under the condition that $A$ has full row rank) is dependent on undirected graphs, whereas iDDGT can work for directed graphs.
\end{enumerate}

\section{Preliminaries} \label{preliminaries}
\textit{Notations:} $\1_n$ and $0_n$ represent the all-zero vector and the all-one vector, respectively, and $\mI_n$ denotes the $n\times n$ identity matrix. Notice that if the dimensions of the vectors consisting of ones and zeros, and the identity matrix can be inferred from the context, we will not explicitly indicate their dimensions. For $B \in \mR^{m \times n}$, $\lt[B\rt]_{ij}$ denotes the element of $B$ in the $i$-th row and the $j$-th column, $\us(B)$ and $\os(B)$ denote the smallest non-zero and largest singular values of $B$, respectively. $\Col(B)$ represents the column space of $B$.
$\norm{\cdot}$ denotes the Euclidean norm, and $\text{diag}(\cdot)$ denotes the (block) diagonal matrix.
For a vector $v \in \mR^n$, we define $\1v = \1 \otimes v$, where the dimension of $\1$ can be easily inferred from the context of $\1v$.

In the following, we provide several useful lemmas that will be utilized in the subsequent convergence analysis. \emph{In particular, if a lemma or theorem is not referenced and is not immediately followed by a proof, we assume that its proof is included in the appendix.}
\begin{lemma} \cite[Theorem 2.1.10]{nesterov2018lectures} \label{convex_property}
  Let $f:\mR^n \rightarrow \mR$ be continuously differentiable and $\mu$-strongly convex over $\mR^n$, then we have
  \eqe{
    \mu\norm{x-y} \leq \norm{\nabla f(x) - \nabla f(y)}, \ \forall x, y \in \mR^n.
    \nonumber
  }
\end{lemma}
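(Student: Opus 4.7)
The plan is to derive the stated bound from the strong monotonicity of $\nabla f$, which is a standard consequence of $\mu$-strong convexity, and then obtain the norm inequality via a single application of Cauchy--Schwarz. Since $f$ is continuously differentiable and $\mu$-strongly convex on $\mR^n$, the usual quadratic lower bound characterization is available: for any $x,y \in \mR^n$,
\begin{align*}
  f(y) &\geq f(x) + \langle \nabla f(x), y - x\rangle + \tfrac{\mu}{2}\norm{y-x}^2, \\
  f(x) &\geq f(y) + \langle \nabla f(y), x - y\rangle + \tfrac{\mu}{2}\norm{x-y}^2.
\end{align*}

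First I would add these two inequalities; the function values cancel and the two quadratic terms combine, yielding the strong monotonicity estimate
\begin{align*}
  \langle \nabla f(x) - \nabla f(y), x - y\rangle \geq \mu \norm{x-y}^2.
\end{align*}
Next I would apply the Cauchy--Schwarz inequality to the inner product on the left-hand side, obtaining
\begin{align*}
  \norm{\nabla f(x) - \nabla f(y)}\cdot \norm{x-y} \geq \mu \norm{x-y}^2.
\end{align*}
Dividing both sides by $\norm{x-y}$, in the case $x \neq y$, yields the claimed bound, while the case $x = y$ is trivial since both sides vanish.

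The main point is that there is no real obstacle: once the quadratic lower bound formulation of strong convexity is in hand, the proof is a two-line calculation combined with Cauchy--Schwarz. The only subtlety is separating the degenerate case $x = y$ so that the division by $\norm{x-y}$ is justified, and this is immediate. This is precisely why the paper merely cites the result from Nesterov's lecture notes rather than including a proof in the appendix.
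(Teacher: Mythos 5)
Your proof is correct: the passage from $\mu$-strong convexity to strong monotonicity of $\nabla f$ and then to the norm bound via Cauchy--Schwarz (with the trivial case $x=y$ handled separately) is exactly the standard argument for this result. The paper gives no proof of its own here --- it simply cites Nesterov's Theorem 2.1.10 --- so there is nothing to compare against; your two-line derivation is the canonical one.
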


\begin{lemma} \label{average}
  For any $B \in \mR^{nm \times q}$ ($B$ can be a vector with $q=1$), let it be partitioned as $B = [B_1\T, \cdots, B_n\T]\T$, we have
  $$\norm{B - \1\frac{1}{n}\sum_{i=1}^n B_i} \leq \norm{B}.$$
\end{lemma}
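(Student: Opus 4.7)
The plan is to identify the blockwise averaging map $B \mapsto \1\frac{1}{n}\sum_{i=1}^n B_i$ as an orthogonal projection applied to $B$, and then to appeal to the standard fact that the orthogonal complement of a projection is also a projection, hence non-expansive.

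First, I would exploit the Kronecker structure of the block partition to write
\begin{equation*}
  \1\frac{1}{n}\sum_{i=1}^n B_i \;=\; \frac{1}{n}(\1 \otimes \mI_m)(\1 \otimes \mI_m)\T B \;=\; PB,
\end{equation*}
where $P := \frac{1}{n}(\1\1\T) \otimes \mI_m \in \mR^{nm \times nm}$. A direct check yields $P\T = P$ and, using $\1\T\1 = n$, also $P^2 = P$, so $P$ is an orthogonal projection; consequently $\mI - P$ is an orthogonal projection as well, which in particular satisfies $\norm{(\mI - P)B} \leq \norm{B}$ for any conformable $B$. Since $B - \1\frac{1}{n}\sum_{i=1}^n B_i = (\mI - P)B$ by the first display, the claim follows.

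An equivalent, completely elementary route is to set $\bar{B} := \frac{1}{n}\sum_{i=1}^n B_i$ and expand the squared norm blockwise:
\begin{equation*}
  \norm{B - \1\bar{B}}^2 \;=\; \sum_{i=1}^n \norm{B_i - \bar{B}}^2 \;=\; \sum_{i=1}^n \norm{B_i}^2 - n\norm{\bar{B}}^2 \;\leq\; \norm{B}^2,
\end{equation*}
where the second equality uses $\sum_i B_i = n\bar{B}$ to cancel the cross term after expansion. In essence the lemma just asserts that variance is bounded by the second moment, so I do not anticipate any substantive obstacle; the only care required is in keeping track of the Kronecker factors so that the projection identity acts correctly on the block-partitioned object $B$.
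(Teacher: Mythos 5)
Your proposal is correct, and your ``elementary route'' is exactly the paper's own proof: expand the squared norm and use $\sum_i B_i = n\bar{B}$ to reduce the cross term, leaving $\norm{B}^2 - n\norm{\bar{B}}^2 \leq \norm{B}^2$. The orthogonal-projection framing via $P = \frac{1}{n}(\1\1\T)\otimes \mI_m$ is a clean equivalent repackaging of the same fact (non-expansiveness of $\mI-P$), but it does not change the substance of the argument.
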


\section{Algorithm Design} \label{design}
\cref{original_pro} can be reformulated as
\begin{equation} \label{mP} \tag{P2}
  \begin{aligned}
    \min_{\mx \in \mR^d} \  & f(\mx)    \\
    \text{s.t.} \           & A\mx = b,
  \end{aligned}
\end{equation}
where $\mx = [x_1\T, \cdots, x_n\T]\T \in \mR^d$, $d=\sum_{i=1}^n d_i$, $f(\mx) = \sum_{i=1}^n f_i(x_i)$, and $A = [A_1, \cdots, A_n] \in \mR^{p \times d}$. In this work, we assume the following assumption holds.
\begin{assumption} \label{convex}
  $f_i$ is $\mu_i$-strongly convex and $l_i$-smooth over $\mR^{d_i}$, where $\mu_i$ and $l_i$ are both positive constants, $i = 1, \cdots, n$.
\end{assumption}

Since the strong duality holds for \cref{mP}, we can alternatively solve its dual. The dual function of \cref{mP} can be decomposed as
\eqe{
  \phi(\lambda) &= \inf_{\mx \in \mR^d} f(\mx) + \lambda\T(A\mx - b) \\
  &= \sum_{i=1}^n\inf_{x_i \in \mR^{d_i}} f_i(x_i) + \lambda\T\lt(A_ix_i - \frac{1}{n}b\rt) \\
  &= \sum_{i=1}^n \phi_i(\lambda),
  \nonumber
}
then we can reformulate the dual of \cref{mP} as
\begin{equation} \label{dual_pro} \tag{P3}
  \max_{\lambda \in \mR^{p}} \ \phi(\lambda).
\end{equation}
An important fact is that if $f$ is strictly convex, then $\phi$ is differentiable and \cite{bertsekas1997nonlinear}
\eqe{
  \nabla \phi(\lambda) = A\mx^*(\lambda) - b,
}
where $\mx^*(\lambda) = \arg\min_{\mx \in \mR^{d}}\lt\{f(\mx) + \lambda\T(A\mx - b)\rt\}$. Given \cref{convex}, obviously $f$ is $\mu$-strongly convex and $l$-smooth, where $\mu = \min_{i = 1, \cdots, n}{\mu_i}$ and $l = \max_{i = 1, \cdots, n}{l_i}$. Then we have the following lemma.
\begin{lemma} \label{dual_property}
  Suppose \cref{convex} holds, then $\phi$ is $\frac{\os^2(A)}{\mu}$-smooth over $\mR^p$ and $\frac{\us^2(A)}{l}$-strongly concave over $\Col(A)$.
\end{lemma}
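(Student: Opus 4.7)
The plan is to work directly with the formula $\nabla \phi(\lambda) = A\mx^*(\lambda) - b$ and the first-order optimality condition $\nabla f(\mx^*(\lambda)) + A\T\lambda = 0$, which characterizes the minimizer $\mx^*(\lambda)$ since $f$ is strongly convex under \cref{convex}. From the latter, subtracting the identity at two points $\lambda_1,\lambda_2$ gives the key relation
\eqe{
  \nabla f(\mx^*(\lambda_1)) - \nabla f(\mx^*(\lambda_2)) = -A\T(\lambda_1 - \lambda_2),
  \nonumber
}
which will be the bridge between properties of $f$ and properties of $\phi$.

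For the smoothness claim, I would start from
$\norm{\nabla \phi(\lambda_1) - \nabla \phi(\lambda_2)} = \norm{A(\mx^*(\lambda_1) - \mx^*(\lambda_2))} \leq \os(A)\norm{\mx^*(\lambda_1) - \mx^*(\lambda_2)}$, then invoke \cref{convex_property} applied to the $\mu$-strongly convex $f$ to get $\mu\norm{\mx^*(\lambda_1) - \mx^*(\lambda_2)} \leq \norm{\nabla f(\mx^*(\lambda_1)) - \nabla f(\mx^*(\lambda_2))} = \norm{A\T(\lambda_1-\lambda_2)} \leq \os(A)\norm{\lambda_1-\lambda_2}$. Chaining the two inequalities yields the $\frac{\os^2(A)}{\mu}$-Lipschitz bound on $\nabla \phi$ over all of $\mR^p$, hence smoothness.

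For strong concavity over $\Col(A)$, I would establish the monotonicity-type inequality $\la \nabla\phi(\lambda_1) - \nabla\phi(\lambda_2), \lambda_1-\lambda_2\ra \leq -\frac{\us^2(A)}{l}\norm{\lambda_1-\lambda_2}^2$ whenever $\lambda_1-\lambda_2 \in \Col(A)$, which is the standard differential characterization of strong concavity. Expanding the left side gives $-\la \mx^*(\lambda_1) - \mx^*(\lambda_2), \nabla f(\mx^*(\lambda_1)) - \nabla f(\mx^*(\lambda_2))\ra$, and co-coercivity of $\nabla f$ (a consequence of $l$-smoothness and convexity of $f$) bounds this by $-\frac{1}{l}\norm{\nabla f(\mx^*(\lambda_1)) - \nabla f(\mx^*(\lambda_2))}^2 = -\frac{1}{l}\norm{A\T(\lambda_1-\lambda_2)}^2$. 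The restriction to $\Col(A)$ is used precisely in the last step: since $\Col(A)$ is the orthogonal complement of $\ker A\T$, any $v \in \Col(A)$ satisfies $\norm{A\T v} \geq \us(A)\norm{v}$, giving the desired bound.

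The only subtle point is that \cref{convex_property} is stated as an implication of strong convexity and does not immediately produce strong concavity in the reverse direction, so I cannot simply read off the strong concavity of $\phi$ from a lower bound on $\norm{\nabla\phi(\lambda_1)-\nabla\phi(\lambda_2)}$; instead I go through the monotonicity inequality above, which is the real obstacle to handle cleanly. The second delicate point is to remember that the singular-value lower bound $\norm{A\T v} \geq \us(A)\norm{v}$ fails on all of $\mR^p$ when $A$ is rank-deficient, which is exactly why the strong concavity is claimed only on $\Col(A)$; this is also why co-coercivity (rather than merely Lipschitzness) of $\nabla f$ is essential, since it produces the correct quadratic lower bound in terms of $\norm{A\T(\lambda_1-\lambda_2)}^2$ rather than just $\norm{\mx^*(\lambda_1)-\mx^*(\lambda_2)}^2$.
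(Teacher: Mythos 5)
Your proof is correct and is essentially the paper's argument in primal clothing: the paper writes $\phi(\lambda) = -f^*(-A\T\lambda) - \lambda\T b$ and invokes the $\frac{1}{\mu}$-smoothness and $\frac{1}{l}$-strong convexity of the conjugate $f^*$, which are exactly the inverse-Lipschitz bound from \cref{convex_property} and the co-coercivity of $\nabla f$ that you use via the optimality identity $\nabla f(\mx^*(\lambda_1)) - \nabla f(\mx^*(\lambda_2)) = -A\T(\lambda_1-\lambda_2)$. Your handling of the two delicate points (not reversing \cref{convex_property}, and restricting $\norm{A\T v} \geq \us(A)\norm{v}$ to $v \in \Col(A)$) matches where the paper's proof also localizes the $\Col(A)$ assumption, so no gap.
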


\begin{remark}
  An evident fact about \cref{dual_property} is that the solution of \cref{dual_pro}, denoted as $\lambda^*$, is not unique unless $A$ has full row rank, which ensures that $\phi$ is strongly concave over $\mR^p$. However, \cref{dual_property} also indicates that $\phi$ is strongly concave over $\Col(A)$, implying that the projection of $\lambda^*$ onto $\Col(A)$, denoted as $\lambda^*_c$, is unique.
\end{remark}

Applying the classical gradient method to \cref{dual_pro} gives the dual ascent method (DA)
\eqe{
  \lambda\+ = \lambda^k + \alpha\nabla\phi(\lambda^k),
  \nonumber
}
which can be unfolded as
\eqe{
  \mx\+ &= \arg\min_{\mx \in \mR^{d}}\lt\{f(\mx) + \lambda^{k\T}(A\mx - b)\rt\}, \\
  \lambda\+ &= \lambda^k + \alpha\lt(A\mx\+ - b\rt),
}

The following lemma illustrates the contraction property of DA, which also implies its linear convergence.
\begin{lemma} \label{contraction}
  Suppose \cref{convex} holds, $\lambda^0 = 0$, and the step-size satisfies $0 < \alpha < \frac{2\mu}{\os^2(A)}$, then we have
  \eqe{
    \norm{\lambda\+ - \lambda^*_c} \leq \eta\norm{\lambda^k - \lambda^*_c}, \ \forall k \geq 0,
  }
  where $\eta = \max\lt\{\lt|1-\frac{\alpha\os^2(A)}{\mu}\rt|, \lt|1-\frac{\alpha\us^2(A)}{l}\rt|\rt\} \in (0, 1)$.
\end{lemma}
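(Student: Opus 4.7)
I would reduce the statement to a one-step linear contraction on $\Col(A)$, where by \cref{dual_property} $\phi$ is both $\os^2(A)/\mu$-smooth and $\us^2(A)/l$-strongly concave. Two preparatory invariance facts make this reduction legitimate. First, every iterate lies in $\Col(A)$: since \cref{original_pro} is feasible, $b \in \Col(A)$, so $\nabla\phi(\lambda) = A\mx^*(\lambda) - b \in \Col(A)$ for every $\lambda$; starting from $\lambda^0 = 0$ an induction then yields $\lambda^k \in \Col(A)$ for all $k \geq 0$. Second, $\nabla\phi(\lambda^*_c) = 0$: since $\lambda^* - \lambda^*_c \perp \Col(A)$ we have $A\T\lambda^* = A\T\lambda^*_c$, and because $\mx^*(\lambda)$ depends on $\lambda$ only through $A\T\lambda$, it follows that $\nabla\phi(\lambda^*_c) = \nabla\phi(\lambda^*) = 0$.

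With these in place, I would write $\lambda\+ - \lambda^*_c = (\lambda^k - \lambda^*_c) + \alpha(\nabla\phi(\lambda^k) - \nabla\phi(\lambda^*_c))$ and use the stationarity condition $\nabla f(\mx^*(\lambda)) = -A\T\lambda$, together with a mean-value / integrated-Hessian representation of $\nabla f$ along the segment joining $\mx^*(\lambda^*_c)$ and $\mx^*(\lambda^k)$, to obtain $\mx^*(\lambda^k) - \mx^*(\lambda^*_c) = -H^{-1}A\T(\lambda^k - \lambda^*_c)$ for a symmetric $H$ with $\mu\mI \preceq H \preceq l\mI$. Substituting yields $\lambda\+ - \lambda^*_c = (\mI - \alpha AH^{-1}A\T)(\lambda^k - \lambda^*_c)$. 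The positive-semidefinite operator $AH^{-1}A\T$ leaves $\Col(A)$ invariant and, restricted there, has eigenvalues in $[\us^2(A)/l,\, \os^2(A)/\mu]$; hence the operator norm of $\mI - \alpha AH^{-1}A\T$ on $\Col(A)$ equals $\max\{|1-\alpha\os^2(A)/\mu|,\, |1-\alpha\us^2(A)/l|\} = \eta$. Combined with $\lambda^k - \lambda^*_c \in \Col(A)$ this delivers the claimed contraction.

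To confirm $\eta \in (0,1)$, the hypothesis $\alpha < 2\mu/\os^2(A)$ gives $|1-\alpha\os^2(A)/\mu| < 1$ directly, and since $\us^2(A)/l \leq \os^2(A)/\mu$ (using $\mu \leq l$ and $\us(A) \leq \os(A)$) it also gives $\alpha\us^2(A)/l < 2$ and hence $|1-\alpha\us^2(A)/l| < 1$.

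The main technical obstacle is the integrated-Hessian step: under \cref{convex}, $f$ is only guaranteed to be $\mathcal{C}^{1,1}$ rather than $\mathcal{C}^2$, so the averaged Hessian $H$ is not literally defined pointwise. I would circumvent this either by mollifying $f$ to a $\mathcal{C}^2$ strongly convex smooth function with the same moduli and passing to the limit, or, more cleanly, by passing to the convex conjugate: $\mx^*(\lambda) = \nabla f^*(-A\T\lambda)$ with $f^*$ being $1/l$-strongly convex and $1/\mu$-smooth, and then applying an averaged-Jacobian representation to $\nabla f^*$, whose Jacobian exists almost everywhere by Rademacher's theorem. Apart from this regularity point, the rest of the argument is essentially the eigenvalue bound for $AH^{-1}A\T$ on $\Col(A)$, which is a routine singular-value estimate.
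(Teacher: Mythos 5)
Your argument is correct in outline, but it takes a genuinely different route from the paper's. The paper establishes the iterate invariance $\lambda^k \in \Col(A)$ exactly as you do, but then proves the contraction purely with first-order inequalities: it expands $\norm{\lambda^k + \alpha\nabla\phi(\lambda^k) - (\lambda^*_c + \alpha\nabla\phi(\lambda^*_c))}^2$ and applies the co-coercivity inequality for a $\mu'$-strongly concave, $l'$-smooth function (Lemma 3.11 of Bubeck), with $(\mu', l')$ reparametrized so that $\alpha = \frac{2}{\mu'+l'}$ holds exactly in each of the two step-size regimes; this makes the cross terms collapse and delivers the tight factor $\eta$ without ever touching second derivatives. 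Your route instead writes $\lambda\+ - \lambda^*_c = (\mI - \alpha A H^{-1}A\T)(\lambda^k - \lambda^*_c)$ via an averaged Hessian and bounds the spectrum of $AH^{-1}A\T$ on $\Col(A)$; this is more transparent about where the two branches of $\eta$ come from, and your explicit verification that $\nabla\phi(\lambda^*_c)=0$ (which the paper uses silently) is a welcome addition. The price is the regularity tax you correctly flag: under \cref{convex} $f$ is only $\mathcal{C}^{1,1}$, so $H$ is not defined pointwise. Of your two proposed repairs, mollification is the safer one; the Rademacher route needs an extra step, since almost-everywhere differentiability of $\nabla f^*$ in $\mR^d$ does not by itself give differentiability almost everywhere on the particular segment joining $-A\T\lambda^*_c$ and $-A\T\lambda^k$ (that segment is a null set), so you would need Alexandrov's theorem plus a Fubini or segment-perturbation argument to make the averaged-Jacobian representation rigorous. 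The paper's co-coercivity argument sidesteps this entirely, which is precisely why it is the standard proof of this contraction estimate.
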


\begin{algorithm}[tb]
  \caption{{\small Inexect Decentralized Dual Gradient Tracking}}
  \label{alg:iD2GT}
  \begin{algorithmic}[1]
    \Require
    $W$, $K$, $\beta$, the subproblem solver
    \Ensure
    $x_i^K$, for $i= 1, \cdots, n$
    \State Agent $i$ implements
    \State $x_i^0 = 0$, $z_i^0 = -\frac{1}{n}b$
    \For {$k=0,\dots, K-1$ }
    \State Solve
    \eqe{ \label{inner_problem}
      \min_{x_i \in \mR^{d_i}}\lt\{f_i(x_i) + \lambda_i^{k\T}\lt(A_ix_i - \frac{1}{n}b\rt)\rt\}
    }
    to obtain an inexact solution $x_i\+$.
    \State $z_i\+ = \sum_{j=1}^n[W]_{ij}z_j^k + A_i(x_i\+-x_i^k)$
    \State $\lambda_i\+ = \sum_{j=1}^n[W]_{ij}\lt(\lambda_j^k + \beta z_j\+\rt)$
    \EndFor
  \end{algorithmic}
\end{algorithm}

While DA can achieve linear convergence in solving \cref{dual_pro}, it cannot be implemented in a decentralized manner due to the requirement of global information for the dual gradient $\sum_{i=1}^n A_i x_i - b$. Additionally, solving a subproblem exactly to obtain the dual gradient $\nabla \phi(\lambda^k)$ at each iteration of DA is computationally expensive and often impractical.
To address these limitations, we propose iDDGT, which is a decentralized version of DA that eliminates the need for solving the subproblem exactly at each iteration.

Let $\ml = [\lambda_0\T, \cdots, \lambda_k\T]\T$, $\mz = [z_0\T, \cdots, z_k\T]\T$, $\mA = \text{diag}(A_1, \cdots, A_n)$, $\mb = \1_n \otimes \frac{1}{n}b$, and $\mW = W \otimes \mI_p$, we can rewrite iDDGT in a compact form as follows.
\begin{subequations} \label{id2gt}
  \begin{align}
    \mx\+ & \approx \arg\min_{\mx \in \mR^{d}}\lt\{f(\mx) + \ml^{k\T}\lt(\mA\mx-\mb\rt)\rt\}, \label{id2gt_x} \\
    \mz\+ & = \mW\mz^k + \mA(\mx\+-\mx^k), \label{id2gt_z}                                                    \\
    \ml\+ & = \mW\lt(\ml^k + \beta\mz\+\rt). \label{id2gt_l}
  \end{align}
\end{subequations}

\begin{remark}
  The decentralized nature of iDDGT originates from the classical gradient tracking technique \cite{nedic2017achieving,qu2017harnessing,scutari2019distributed}, which is utilized to track the global dual gradient in a decentralized manner. The key features of iDDGT lie in the flexibility to control the level of inexactness in the dual gradient and the freedom to choose the subproblem solver. Thanks to these features, iDDGT demonstrates significantly higher computational efficiency compared to state-of-the-art methods in numerical experiments. Moreover, it has been proven that iDDGT can achieve linear convergence over directed graphs without imposing any conditions on $A_i$ or $A$. In contrast, existing algorithms require $A$ to have full row rank and the graphs to be undirected in order to achieve similar convergence guarantees.
\end{remark}

\begin{remark}
  A related work is \cite{zhang2020distributed}, which considers a special case of \cref{original_pro} where $A_i = \mathbf{I}$, and proposes a similar algorithm called distributed dual gradient tracking (DDGT). The main distinctions between iDDGT and DDGT lie in the range of problem settings they can handle and the approach they employ for utilizing the dual gradient. DDGT is only capable of solving \cref{original_pro} when $A_i = \mI$, while iDDGT can handle general $A_i$, giving it much a broader range of applications. It is worth noting that this generalization is non-trivial since the convergence analysis of DDGT heavily relies on the property $A_i = \mI$, making it difficult to extend to general $A_i$. Furthermore, DDGT utilizes the exact dual gradient and solves the subproblem exactly at each iteration, which could be computationally expensive and impractical. Moreover, as mentioned earlier, we have observed that using the exact gradient leads to significantly lower computational and communication efficiencies compared to using an inexact dual gradient. Consequently, iDDGT is much more efficient than DDGT. The advantage of DDGT lies in its ability to work over more general directed graphs compared to iDDGT.
\end{remark}

\section{Convergence Analysis} \label{convergence}
In this section, we analyze the linear convergence of iDDGT.
\begin{assumption} \label{W}
  The mixing matrix $W$ associated with the network graph is assumed to be primitive, doubly stochastic, and with positive diagonal entries.
\end{assumption}
\begin{remark}
  \cref{W} can be satisfied by strongly-connected directed graphs that admit doubly-stochastic weights (see \cite{xin2022fast} for a more detailed discussion), which can cover connected undirected graphs as special cases. Consequently, our network condition is more inclusive compared to \cite{li2022implicit,li2022nested,alghunaim2021dual}, where only undirected graphs are considered.
\end{remark}
Given \cref{W}, $W$ possesses the following important property \cite{qu2017harnessing,xin2022fast}\footnote{Though the above property is derived under the assumption that $\mathcal{G}$ is undirected and connected in \cite{qu2017harnessing}, it can be trivially proven for our case using the Perron-Frobenius theory.}
\eqe{ \label{sigma}
  \sigma =& \norm{W-\frac{1}{n}\1\1\T} \in (0,1),
}
then the following lemma immediately holds.
\begin{lemma}[\cite{qu2017harnessing}] \label{consensus}
  Suppose \cref{W} holds, then we have
  $$\norm{Wx - \frac{1}{n}\1\1\T x} \leq \sigma\norm{x-\frac{1}{n}\1\1\T x}, \ \forall x \in \mR^n.$$
\end{lemma}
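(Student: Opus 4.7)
The plan is to exploit the doubly stochastic structure of $W$ granted by \cref{W}, which gives $W\1 = \1$ and $\1\T W = \1\T$, together with the definition $\sigma = \norm{W - \frac{1}{n}\1\1\T}$ from \cref{sigma}. The proof is then a one-line computation once the right algebraic identity is in place.

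First I would establish the factorization
\[
Wx - \tfrac{1}{n}\1\1\T x = \lt(W - \tfrac{1}{n}\1\1\T\rt)\lt(x - \tfrac{1}{n}\1\1\T x\rt).
\]
Expanding the right-hand side yields four terms. The diagonal terms give $Wx + \tfrac{1}{n^2}\1\1\T\1\1\T x$, while the two cross terms equal $-W\cdot \tfrac{1}{n}\1\1\T x - \tfrac{1}{n}\1\1\T x$. Using $W\1 = \1$ on the first cross term and $\1\T\1 = n$ on the quadratic term, both reduce to $\tfrac{1}{n}\1\1\T x$ (with opposite signs), so three of the four terms collapse to a single $-\tfrac{1}{n}\1\1\T x$, yielding exactly $Wx - \tfrac{1}{n}\1\1\T x$.

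Then the inequality follows from submultiplicativity of the spectral/operator norm applied to this factorization:
\[
\norm{Wx - \tfrac{1}{n}\1\1\T x} \leq \norm{W - \tfrac{1}{n}\1\1\T}\cdot\norm{x - \tfrac{1}{n}\1\1\T x} = \sigma\,\norm{x - \tfrac{1}{n}\1\1\T x}.
\]
No substantial obstacle is expected here: primitivity and double stochasticity of $W$, via Perron--Frobenius, already ensure $\sigma \in (0,1)$ as noted in \cref{sigma}, and the remainder is routine algebra exploiting $W\1=\1$. The only minor point worth stating carefully is that the factorization above crucially uses \emph{both} left and right invariance of $\tfrac{1}{n}\1\1\T$ under $W$, which is precisely why double stochasticity (rather than just row-stochasticity) is required in \cref{W}.
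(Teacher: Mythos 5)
Your proof is correct and is the standard argument for this lemma (the paper gives no proof of its own, citing \cite{qu2017harnessing}, whose proof is exactly this factorization followed by submultiplicativity of the operator norm). One small correction to your closing remark: the identity $\lt(W-\frac{1}{n}\1\1\T\rt)\lt(x-\frac{1}{n}\1\1\T x\rt) = Wx - \frac{1}{n}\1\1\T x$ uses only $W\1=\1$ (row-stochasticity) together with $\1\T\1=n$, not left invariance; column-stochasticity is instead what is needed, jointly with primitivity, to guarantee $\sigma<1$ in \cref{sigma} and to make the averaging identities such as \cref{average_z} work elsewhere in the paper.
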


Let
\eqe{
  \mx^*(\ml) &= \arg\min_{\mx \in \mR^{d}}\lt\{f(\mx) + \ml\T\lt(\mA\mx-\mb\rt)\rt\},
}
and define
\eqe{
  \az^k = \frac{1}{n}\sum_{i=1}^n z_i^k, \ \al^k = \frac{1}{n}\sum_{i=1}^n \lambda_i^k, \ax^k = \mx^*\lt(\1\al^{k-1}\rt),
}
then we have the following lemma.
\begin{lemma} \label{average_z}
  Given $z_i^0 = A_ix^0_i - \frac{1}{n}b$, then we have
  \eqe{
    \az\+ &= \az^k + \frac{1}{n}A\lt(\mx\+-\mx^k\rt) = \frac{1}{n}(A\mx\+-b), \\
    \al\+ &= \al^k + \beta\az\+ = \al^k + \frac{\beta}{n}(A\mx\+-b).
    \nonumber
  }
\end{lemma}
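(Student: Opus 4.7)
The plan is to prove both identities by exploiting the double stochasticity of $W$ (so that averaging commutes with the mixing) and then induct on $k$ to establish the closed-form expression $\bar{z}^k = \frac{1}{n}(A\mathbf{x}^k - b)$.

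First, I would average the tracking update \cref{id2gt_z} componentwise. Since \cref{W} guarantees $\mathbf{1}^\top W = \mathbf{1}^\top$, left-multiplying \cref{id2gt_z} by $\frac{1}{n}(\mathbf{1}_n^\top \otimes \mathbf{I}_p)$ eliminates the mixing matrix and yields
\eqe{
  \bar{z}^{k+1} = \bar{z}^k + \frac{1}{n}\sum_{i=1}^n A_i(x_i^{k+1} - x_i^k) = \bar{z}^k + \frac{1}{n}A(\mathbf{x}^{k+1} - \mathbf{x}^k),
}
which is the first equality of the first claim. Note that this step only uses the structure $\mathbf{A} = \text{diag}(A_1,\dots,A_n)$ together with $A = [A_1,\dots,A_n]$, so the block concatenation $A\mathbf{x} = \sum_i A_i x_i$ handles the conversion.

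Next, I would establish $\bar{z}^{k+1} = \frac{1}{n}(A\mathbf{x}^{k+1} - b)$ by induction. The base case follows from the hypothesis $z_i^0 = A_i x_i^0 - \frac{1}{n}b$: averaging gives $\bar{z}^0 = \frac{1}{n}A\mathbf{x}^0 - \frac{1}{n}b$ (consistent with the algorithm's initialization $x_i^0 = 0$, $z_i^0 = -\frac{1}{n}b$). For the inductive step, assume $\bar{z}^k = \frac{1}{n}(A\mathbf{x}^k - b)$; substituting into the recursion just derived yields
\eqe{
  \bar{z}^{k+1} = \frac{1}{n}(A\mathbf{x}^k - b) + \frac{1}{n}A(\mathbf{x}^{k+1} - \mathbf{x}^k) = \frac{1}{n}(A\mathbf{x}^{k+1} - b),
}
closing the induction.

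For the $\bar{\lambda}^{k+1}$ identity, I would apply exactly the same averaging trick to \cref{id2gt_l}: left-multiplication by $\frac{1}{n}(\mathbf{1}_n^\top \otimes \mathbf{I}_p)$ commutes with $\mathbf{W}$ and produces $\bar{\lambda}^{k+1} = \bar{\lambda}^k + \beta \bar{z}^{k+1}$. Substituting the closed form for $\bar{z}^{k+1}$ obtained above gives the final expression $\bar{\lambda}^{k+1} = \bar{\lambda}^k + \frac{\beta}{n}(A\mathbf{x}^{k+1} - b)$.

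There is no real obstacle here; the statement is essentially a bookkeeping lemma confirming that the gradient-tracking variable $\mathbf{z}$ preserves the correct running average. The only subtlety to handle cleanly is the interplay between the Kronecker structure $\mathbf{W} = W \otimes \mathbf{I}_p$ and the block-diagonal $\mathbf{A}$; making sure that $\mathbf{1}^\top W = \mathbf{1}^\top$ transfers to $(\mathbf{1}_n^\top \otimes \mathbf{I}_p)\mathbf{W} = \mathbf{1}_n^\top \otimes \mathbf{I}_p$ is the one place requiring care, but this is immediate from the mixed-product property of Kronecker products.
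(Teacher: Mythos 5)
Your proposal is correct and follows essentially the same route as the paper: average the updates using the double stochasticity of $W$ (i.e., $\1\T W = \1\T$), then propagate the initialization $\az^0 = \frac{1}{n}(A\mx^0 - b)$ forward through the recursion. The paper leaves the induction implicit while you spell it out, but the substance is identical.
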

The following lemma establishes a linear matrix inequality regarding the iterations of iDDGT, which is crucial for proving its linear convergence.
\begin{lemma} \label{key_lemma}
  Suppose \cref{convex,W} holds, $\ml^0 = 0$, the step-size satisfies $0 < \beta < \frac{2n\mu}{\os^2(A)}$, and
  \eqe{ \label{inner_error}
    \norm{\mx\+-\mx^*(\ml^k)} \leq \delta\+, \ \forall k \geq 0,
  }
  then we have
  \eqe{
  \zeta^k \leq M^k\zeta^0 + \sum_{i=0}^{k-1}M^{k-1-i}H\xi^i, \ \forall k \geq 1,
  \nonumber
  }
  where $\zeta^k = \lt[\begin{array}{c}
        \norm{\mz^k-\1\az^k} \\
        \norm{\ml^k-\1\al^k} \\
        \norm{\ml^k-\ml\p}   \\
        \sqrt{n}\norm{\al^k-\lambda^*}
      \end{array}\rt]$, $\xi^k = \lt[\delta\+, \delta^k\rt]\T$,
  \eqe{ \label{2315}
    M = \left[\begin{array}{ccccc}
        \sigma        & 0                                                & \frac{\os^2(\mA)}{\mu}            & 0                          \\
        \beta\sigma^2 & \sigma                                           & \frac{\beta\sigma\os^2(\mA)}{\mu} & 0                          \\
        \beta\sigma   & 1+\sigma+\frac{\beta\os(A)\os(\mA)}{\sqrt{n}\mu} & \frac{\beta\os^2(\mA)}{\mu}       & \frac{\beta\os^2(A)}{n\mu} \\
        0             & \frac{\beta\os(A)\os(\mA)}{\sqrt{n}\mu}          & 0                                 & \nu
      \end{array}\right]
    \nonumber
  }
  and
  \eqe{ \label{2316}
    H = \left[\begin{array}{ccc}
        \os(\mA)                                      & \os(\mA)            \\
        \beta\sigma\os(\mA)                           & \beta\sigma\os(\mA) \\
        \beta\lt(\os(\mA)+\frac{\os(A)}{\sqrt{n}}\rt) & \beta\os(\mA)       \\
        \frac{\beta\os(A)}{\sqrt{n}}                  & 0
      \end{array}\right].
    \nonumber
  }
\end{lemma}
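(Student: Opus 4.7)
The plan is to reduce the claim to the one-step inequality $\zeta\+ \le M\zeta^k + H\xi^k$ for every $k \ge 0$; unrolling it inductively (and using that $M$, $H$ have nonnegative entries) yields the displayed bound. Three auxiliary facts drive every row. First, the consensus contraction $\norm{\mW v - \1\bar v} \le \sigma\norm{v - \1\bar v}$ from \cref{consensus}, lifted to $\mR^{np}$ through $\mW = W \otimes \mI_p$. Second, the map $\ml \mapsto \mx^*(\ml)$ is $\frac{\os(\mA)}{\mu}$-Lipschitz: differentiating the stationarity condition $\nabla f(\mx^*(\ml)) + \mA\T\ml = 0$ and invoking \cref{convex_property} gives $\mu\norm{\mx^*(\ml_1) - \mx^*(\ml_2)} \le \os(\mA)\norm{\ml_1 - \ml_2}$. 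Third, \cref{inner_error} together with the triangle inequality through $\mx^*(\ml^k)$ and $\mx^*(\ml\p)$ gives the key estimate $\norm{\mx\+ - \mx^k} \le \delta\+ + \delta^k + \frac{\os(\mA)}{\mu}\norm{\ml^k - \ml\p}$.

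For the first row I would write $\mz\+ - \1\az\+ = \mW(\mz^k - \1\az^k) + \bigl(\mI - \frac{1}{n}\1\1\T \otimes \mI_p\bigr)\mA(\mx\+ - \mx^k)$, using \cref{average_z} to identify the average, then apply the consensus contraction to the first term and \cref{average} together with the displacement bound above to the second. The second row is analogous: from \cref{id2gt_l} I would decompose $\ml\+ - \1\al\+ = \mW(\ml^k - \1\al^k) + \beta\mW(\mz\+ - \1\az\+)$ (both factors lie in the mean-zero subspace so the consensus factor $\sigma$ applies again), and then substitute the first-row recursion. The third row uses the decomposition $\ml\+ - \ml^k = (\ml\+ - \1\al\+) - (\ml^k - \1\al^k) + \1(\al\+ - \al^k)$; the last term equals $\beta\1\az\+$ with norm $\frac{\beta}{\sqrt{n}}\norm{A\mx\+ - b}$, which I would split as $A(\mx\+ - \mx^*(\ml^k)) + A(\mx^*(\ml^k) - \mx^*(\1\al^k)) + \nabla\phi(\al^k)$ and bound using the Lipschitz property of $\mx^*$, the inexactness $\delta\+$, and the $\frac{\os^2(A)}{\mu}$-smoothness of $\phi$ from \cref{dual_property} applied between $\al^k$ and $\lambda^*$. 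A few entries of $M$ require the crude simplifications $\sigma^2 \le \sigma$ and $\sigma \le 1$ to match the displayed expressions.

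The fourth row is the most delicate step. From \cref{average_z}, $\al\+ = \al^k + \frac{\beta}{n}(A\mx\+ - b)$, which I would rewrite as an inexact dual-ascent step
\eqe{
\al\+ = \al^k + \frac{\beta}{n}\nabla\phi(\al^k) + \frac{\beta}{n}\bigl[A(\mx\+ - \mx^*(\ml^k)) + A(\mx^*(\ml^k) - \mx^*(\1\al^k))\bigr].
\nonumber
}
The step size $\frac{\beta}{n}$ lies in $\bigl(0, \frac{2\mu}{\os^2(A)}\bigr)$ by hypothesis, and a short induction from $\ml^0 = 0$ using $b \in \Col(A)$ together with the update rules shows $\al^k \in \Col(A)$ for all $k$, so the contraction argument of \cref{contraction} applies per step and yields $\norm{\al^k + \frac{\beta}{n}\nabla\phi(\al^k) - \lambda^*} \le \nu\norm{\al^k - \lambda^*}$. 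The remaining perturbation is controlled by the Lipschitz property of $\mx^*$ and \cref{inner_error}, contributing an additive $\frac{\beta\os(A)\os(\mA)}{n\mu}\norm{\ml^k - \1\al^k} + \frac{\beta\os(A)}{n}\delta\+$; multiplying through by $\sqrt{n}$ produces exactly the fourth row. The main obstacle I anticipate is the bookkeeping for row 3, which couples all four coordinates of $\zeta^k$ and requires careful tracking of which factors of $\sigma$ can legitimately be absorbed into $1$ in order to reproduce the displayed entries of $M$ and $H$; the contraction argument for row 4 is conceptually cleaner but hinges on the invariance $\al^k \in \Col(A)$, which must be established up front.
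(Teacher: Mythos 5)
Your proposal is correct and follows essentially the same route as the paper's proof: the same one-step inequality $\zeta\+ \le M\zeta^k + H\xi^k$ unrolled by induction, built from the consensus contraction, the $\frac{\os(\mA)}{\mu}$-Lipschitz continuity of $\ml \mapsto \mx^*(\ml)$, the displacement bound on $\norm{\mx\+ - \mx^k}$, and the perturbed dual-ascent contraction for $\al^k$ with step size $\frac{\beta}{n}$ (including the $\al^k \in \Col(A)$ invariance needed to invoke \cref{contraction}). The only deviation is cosmetic: for the third row the paper writes $\ml\+ - \ml^k = (\mW-\mI)(\ml^k - \1\al^k) + \beta\mW\mz\+$ and bounds $\norm{\mz\+}$ directly, whereas you split off $\1(\al\+ - \al^k) = \beta\1\az\+$ and reuse the second-row bound; both yield the same entries of $M$ and $H$ after absorbing factors of $\sigma$ into $1$.
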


\begin{algorithm}[tb]
  \caption{{\small Nesterov's Accelerated Gradient Descent}}
  \label{alg:AGD}
  \begin{algorithmic}[1]
    \Require
    $g$ ($\mu$-strongly convex and $l$-smooth), $\kappa = \frac{l}{\mu}$, $x^0$
    \Ensure
    $x^T$
    \State $y^0 = x^0$
    \For {$k=0,\dots, T-1$ }
    \State $x\+ = y^k - \frac{1}{l}\nabla g(y^k)$
    \State $y\+ = x\+ + \frac{\sqrt{\kappa}-1}{\sqrt{\kappa}+1}\lt(x\+-x^k\rt)$
    \EndFor
  \end{algorithmic}
\end{algorithm}

\begin{figure*}[tb]
  \begin{center}
    \includegraphics[scale=0.45]{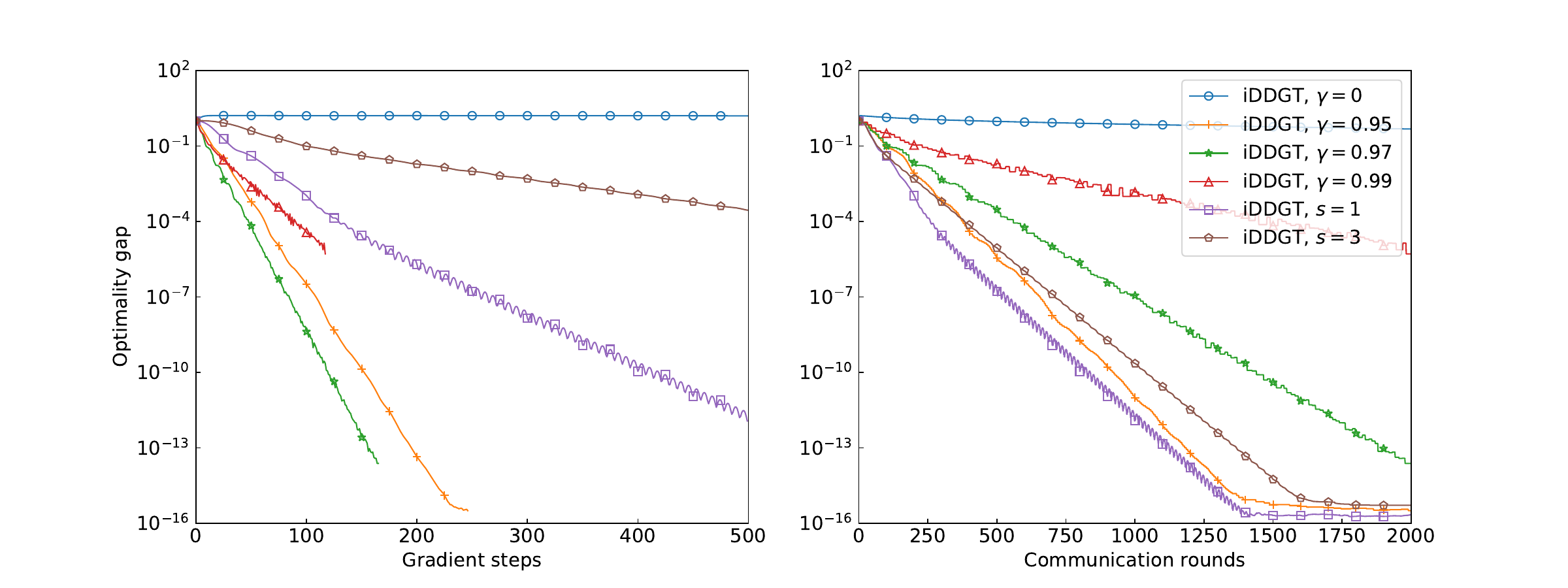}
    \caption {The result of Experiment I.}
    \label{fig1}
  \end{center}
\end{figure*}

\begin{remark}
  There are various strategies to solve the subproblem \cref{id2gt_x} and obtain an inexact solution that satisfies the suboptimality condition \cref{inner_error}. To ensure \cref{inner_error}, we can simply require agent $i$ to satisfy
  \eqe{ \label{inner_error_i}
    \norm{x_i\+-x_i^*(\lambda_i^k)} \leq \frac{\delta\+}{\sqrt{n}}.
  }
  Typically, we employ unconstrained optimization algorithms such as gradient descent, AGD, or second-order methods to iteratively solve \cref{inner_error_i}. Hence, a straightforward approach is to set a stopping condition that is sufficient for \cref{inner_error_i} and terminate the iteration once the condition is met.
  Let $F_i^k(x_i) = f_i(x_i) + \lambda_i^{k\T}\lt(A_ix_i-\frac{1}{n}b\rt)$, obviously $F_i^k(x_i)$ is $\mu_i$-strongly convex and $l_i$-smooth for $k \geq 0$. Notice that $x_i^*(\lambda_i^k)$ represents the solution of $\min_{x_i \in \mR^{d_i}}F_i^k(x_i)$, implying that $\nabla F_i^k(x_i^*(\lambda_i^k)) = 0$. Consequently, we have
  \eqe{
    \norm{x_i\+-x_i^*(\lambda_i^k)} &\leq \frac{1}{\mu}\norm{\nabla F_i^k(x_i\+)} \\
    &= \frac{1}{\mu}\norm{\nabla f(x_i\+) + A_i\T\lambda_i^k}.
  }
  Thus, the stopping condition for agent $i$ to ensure \cref{inner_error_i} can be expressed as
  \eqe{
    \norm{\nabla f(x_i\+) + A_i\T\lambda_i^k} \leq \frac{\mu\delta\+}{\sqrt{n}}.
  }
  Another approach is to predefine the number of inner iterations, which can be estimated based on the theoretical convergence rate of the selected algorithm. \cref{inner_iteration} provides a lower bound on the number of inner iterations for AGD.
\end{remark}

\begin{lemma} \label{inner_iteration}
  Suppose \cref{convex} holds, and AGD (i.e., \cref{alg:AGD}) is chosen as the solver for the inner problem \cref{inner_problem}, then agent $i$ requires at least $\sqrt{\frac{l_i}{\mu_i}}\ln\lt(\frac{n(l_i+\mu_i)\norm{\nabla f_i\lt(x_i^{k+1, 0}\rt)}^2}{(\delta\+)^2\mu^3}\rt)$ inner iterations to ensure \cref{inner_error} at the $k$-th outer iteration, where $x_i^{k+1, 0}$ is the chosen initial value of AGD, $i = 1, \cdots, n$.
\end{lemma}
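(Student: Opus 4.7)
The plan is to bound the number of AGD inner iterations $T$ that guarantee the per-agent tolerance $\norm{x_i\+-x_i^*(\lambda_i^k)}\leq \delta\+/\sqrt{n}$, which, by the remark immediately preceding the lemma, is sufficient for \cref{inner_error}. First, observe that the inner objective $F_i^k(x_i)=f_i(x_i)+\lambda_i^{k\T}(A_ix_i-\frac{1}{n}b)$ is $\mu_i$-strongly convex and $l_i$-smooth, since it differs from $f_i$ only by a linear term. Applying Nesterov's classical accelerated gradient bound \cite{nesterov2018lectures} to the AGD iterates $\{x_i^{k+1,t}\}_{t\geq 0}$ generated by \cref{alg:AGD} then yields
\eqe{
F_i^k\lt(x_i^{k+1,T}\rt)-F_i^k\lt(x_i^*(\lambda_i^k)\rt)\leq \lt(1-\sqrt{\tfrac{\mu_i}{l_i}}\rt)^{T} C_i^k, \nonumber
}
where $C_i^k=F_i^k(x_i^{k+1,0})-F_i^k(x_i^*(\lambda_i^k))+\tfrac{\mu_i}{2}\norm{x_i^{k+1,0}-x_i^*(\lambda_i^k)}^2$.

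Next, I convert both sides into norm-based quantities evaluated at the warm start. On the left, $\mu_i$-strong convexity yields $\tfrac{\mu_i}{2}\norm{x_i^{k+1,T}-x_i^*(\lambda_i^k)}^2\leq F_i^k(x_i^{k+1,T})-F_i^k(x_i^*(\lambda_i^k))$. On the right, $l_i$-smoothness together with the optimality condition $\nabla F_i^k(x_i^*(\lambda_i^k))=0$ gives $F_i^k(x_i^{k+1,0})-F_i^k(x_i^*(\lambda_i^k))\leq \tfrac{l_i}{2}\norm{x_i^{k+1,0}-x_i^*(\lambda_i^k)}^2$, and \cref{convex_property} applied to $F_i^k$ furnishes $\norm{x_i^{k+1,0}-x_i^*(\lambda_i^k)}\leq \norm{\nabla F_i^k(x_i^{k+1,0})}/\mu_i$. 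Chaining these three estimates produces
\eqe{
\norm{x_i^{k+1,T}-x_i^*(\lambda_i^k)}^2\leq \frac{l_i+\mu_i}{\mu_i^3}\norm{\nabla F_i^k\lt(x_i^{k+1,0}\rt)}^2\lt(1-\sqrt{\tfrac{\mu_i}{l_i}}\rt)^{T}. \nonumber
}

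I would then demand that the right-hand side be at most $(\delta\+)^2/n$, take logarithms, and invoke the elementary inequality $-\ln(1-s)\geq s$ for $s\in(0,1)$ with $s=\sqrt{\mu_i/l_i}$ to isolate $T$. This produces
\eqe{
T\geq \sqrt{\tfrac{l_i}{\mu_i}}\,\ln\!\lt(\frac{n(l_i+\mu_i)\norm{\nabla F_i^k(x_i^{k+1,0})}^2}{(\delta\+)^2\mu_i^3}\rt), \nonumber
}
which matches the statement of the lemma after enlarging the lower bound via $\mu\leq \mu_i$ (so $1/\mu_i^3\leq 1/\mu^3$) in the denominator and identifying $\nabla F_i^k(x_i^{k+1,0})$ with $\nabla f_i(x_i^{k+1,0})$, exact at $k=0$ by $\lambda_i^0=0$, and in general justified provided the warm start $x_i^{k+1,0}$ is chosen so that the dual-correction $A_i\T\lambda_i^k$ is absorbed into $\nabla f_i(x_i^{k+1,0})$.

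The principal obstacle I anticipate is this final bookkeeping step, namely reconciling the $\nabla F_i^k$ term produced by the natural AGD analysis with the $\nabla f_i$ term that appears in the claim; this hinges on the convention for reporting the initial gradient and on the chosen warm-start rule. The remaining ingredients---the AGD contraction factor, strong convexity on both sides, and the $-\ln(1-s)\geq s$ inequality---are entirely routine.
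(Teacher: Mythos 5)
Your proof is correct and follows essentially the same route as the paper: Nesterov's accelerated rate for the $\mu_i$-strongly convex, $l_i$-smooth inner objective, conversion of the function-value gap to squared distances via strong convexity on both sides plus the bound $\|x_i^{k+1,0}-x_i^*(\lambda_i^k)\|\le\|\nabla F_i^k(x_i^{k+1,0})\|/\mu_i$, and then solving for $T$ (the paper absorbs your $-\ln(1-s)\ge s$ step by writing the contraction as $e^{-Ts}$ directly). The ``obstacle'' you flag at the end is real but it is a defect of the paper rather than of your argument: the paper's own proof silently writes $\nabla f_i(x_i^{k+1,0})$ where the derivation actually yields $\nabla F_i^k(x_i^{k+1,0})=\nabla f_i(x_i^{k+1,0})+A_i^\top\lambda_i^k$, so your $\nabla F_i^k$ version is the correct bound and the lemma statement should be read (or corrected) accordingly.
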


In the following theorem, we show that iDDGT can achieve linear convergence if $\delta^k$ decreases linearly.
\begin{theorem} \label{main_theorem}
  Suppose \cref{convex,W} holds, $\ml^0 = 0$, the step-size satisfies
  \eqe{ \label{step_condition}
    0 < \beta < \max\lt\{\frac{\mu}{\os^2(\mA)}, \frac{(1-\sigma)^2\us(A)^2\mu^2}{16\os^4(\mA)nl}\rt\},
  }
  and $\delta^k$ defined in \cref{inner_error} satisfies
  \eqe{ \label{error_con}
    \delta\+ = \gamma\delta_k, \ \forall k \geq 0,
  }
  with $\gamma \in (0, 1)$, then we have
  $$\norm{\mx^k-\mx^*} = \mathcal{O}\lt(\theta^k\rt),$$
  where
  $$\theta = \max\lt(1-\frac{\beta\us^2(A)}{2nl}, \sigma+4\sqrt{\beta\frac{\os^4(\mA)nl}{\us^2(A)\mu^2}}, \gamma\rt) \in (0, 1).$$
\end{theorem}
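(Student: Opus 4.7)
The plan is to combine the linear matrix inequality of \cref{key_lemma} with a spectral-radius bound on the iteration matrix $M$, and then convert the resulting geometric decay of $\zeta^k$ into a decay bound on $\norm{\mx^k-\mx^*}$.

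First I would establish a bound of the form $\rho(M) \leq \theta'$ for some $\theta'$ matching the first two arguments of the outer $\max$ in the definition of $\theta$. Since $M$ is entrywise non-negative, the standard route is to exhibit a strictly positive test vector $v \in \mR^4$ with $Mv \leq \theta' v$ componentwise and then invoke a Perron--Frobenius-style argument (so that $M^k v \leq (\theta')^k v$ for all $k$). The rather specific factor $4\sqrt{\beta\,\os^4(\mA)nl/(\us^2(A)\mu^2)}$ in the target $\theta'$ strongly suggests that the four entries of $v$ should scale as suitable powers of $\sqrt{\beta}$ (or $\sqrt{\beta/(1-\sigma)}$) so as to balance the off-diagonal terms in rows~2--4 of $M$. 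The step-size condition \cref{step_condition} is then precisely what is needed for the four row inequalities to hold simultaneously and for $\theta' < 1$. Note that the fourth row of $M$ contributes the diagonal entry $\nu$, which is exactly the DA contraction factor supplied by \cref{contraction} applied to the centroid recursion $\al\+ = \al^k + (\beta/n)(A\mx\+-b)$ of \cref{average_z}.

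With $\rho(M) \leq \theta' < 1$ in hand, \cref{key_lemma} gives $\zeta^k \leq M^k \zeta^0 + \sum_{i=0}^{k-1} M^{k-1-i} H \xi^i$. The homogeneous term decays as $(\theta')^k$, and the hypothesis \cref{error_con} yields $\xi^i = \mathcal{O}(\gamma^i)$, so a routine geometric-convolution estimate bounds the sum by a constant times $\max(\theta',\gamma)^k$. Together this produces $\norm{\zeta^k} = \mathcal{O}(\theta^k)$ with $\theta = \max(\theta',\gamma)$, matching the statement.

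Finally I would pass from $\zeta^k$ and $\delta^k$ back to $\mx^k$. Because $f$ is $\mu$-strongly convex, the parametric optimizer $\ml \mapsto \mx^*(\ml)$ is Lipschitz with constant $\os(\mA)/\mu$ (a direct consequence of \cref{convex_property} applied to the inner minimization), and the primal optimum satisfies $\mx^* = \mx^*(\1\lambda^*)$ for any dual optimum $\lambda^*$. Hence
\[
\norm{\mx\+-\mx^*} \leq \norm{\mx\+-\mx^*(\ml^k)} + \frac{\os(\mA)}{\mu}\bigl(\norm{\ml^k-\1\al^k} + \sqrt{n}\,\norm{\al^k-\lambda^*}\bigr);
\]
the first term is bounded by $\delta\+$ via \cref{inner_error}, and the remaining two pieces are the second and fourth entries of $\zeta^k$. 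Since $\delta\+ = \gamma^{k+1}\delta^0 = \mathcal{O}(\theta^k)$ and $\norm{\zeta^k} = \mathcal{O}(\theta^k)$, the claim $\norm{\mx^k-\mx^*} = \mathcal{O}(\theta^k)$ follows. The principal obstacle is the spectral-radius step: the entries of $M$ couple nontrivially through $\beta, \sigma, \os(A), \us(A), \mu, l$, and $v$ must be chosen so that all four row inequalities hold simultaneously under a single bound on $\beta$. Once the correct ansatz for $v$ is identified, the remaining arguments are essentially bookkeeping.
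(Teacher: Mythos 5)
Your overall architecture is the same as the paper's: invoke \cref{key_lemma} for the linear matrix inequality, bound $\rho(M)$ by the first two arguments of the $\max$ defining $\theta$, handle the forcing term $\sum_{i}M^{k-1-i}H\xi^i$ by a geometric convolution using $\delta^{k}=\gamma^{k}\delta^0$, and finish with exactly the triangle inequality $\norm{\mx^{k+1}-\mx^*}\leq\norm{\mx^{k+1}-\mx^*(\ml^k)}+\frac{\os(\mA)}{\mu}\lt(\norm{\ml^k-\1\al^k}+\sqrt{n}\norm{\al^k-\lambda^*}\rt)$ that the paper uses, together with the identification of $\nu$ in the fourth row of $M$ as the contraction factor from \cref{contraction} applied to the averaged recursion of \cref{average_z}. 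That part is fine.

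The genuine gap is the spectral-radius step, which is where essentially all of the content of this theorem lives, and which you explicitly defer ("once the correct ansatz for $v$ is identified, the remaining arguments are essentially bookkeeping"). You propose to find a positive vector $v$ with $Mv\leq\theta'v$ componentwise; such a $v$ does exist in principle (the Perron eigenvector works for $\theta'=\rho(M)$, and irreducibility of $M$ gives positivity), but exhibiting one whose four row inequalities are simultaneously implied by the single step-size condition \cref{step_condition}, with the specific constant $4\sqrt{\beta\os^4(\mA)nl/(\us^2(A)\mu^2)}$ and the factor $1/2$ in $1-\beta\us^2(A)/(2nl)$, is precisely the nontrivial computation, and you have not carried it out or even written down a candidate $v$. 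The paper takes a different and fully explicit route here: it computes the characteristic polynomial $p(x)=(x-\eta)\lt[xp_0(x)-(a_1a_2\beta)^3\sigma\rt]-(a_1a_2\beta)^3\sigma\eta$ of the $4\times 4$ matrix $M$ (with $a_1=\os(\mA)/\sqrt{\mu}$, $a_2=\os(A)/\sqrt{n\mu}$), uses $a_1^2\beta\leq 1$ and $\os(A)\leq\sqrt{n}\os(\mA)$ to lower-bound the quadratic factor $p_0$ by $(x-\sigma-3\sqrt{a_1^2\beta})^2$, verifies $p(\hat{x})\geq 0$ and monotonicity of $p$ on $[\hat{x},+\infty)$ for $\hat{x}=\max\{1-\frac{\beta\us^2(A)}{2nl},\,\sigma+4\sqrt{a_1^2\beta}\sqrt{\os^2(\mA)nl/(\us^2(A)\mu)}\}$, and then invokes Perron--Frobenius to conclude that $\rho(M)$, being a real eigenvalue of the nonnegative irreducible $M$, satisfies $\rho(M)\leq\hat{x}<1$. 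Until you either produce the test vector and verify all four row inequalities under \cref{step_condition}, or reproduce a root-location argument of this kind, the theorem's quantitative rate $\theta$ is asserted rather than proved.
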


\begin{proof}
  Notice that $M$ is a nonnegative and irreducible matrix, then we have
  $$\lt[M^k\rt]_{ij} = \mathcal{O}\lt(\rho(M)^k\rt), \ i, j = 1, \cdots, 4.$$
  \cref{key_lemma} states that
  \eqe{
  \zeta^k \leq M^k\zeta^0 + \sum_{i=0}^{k-1}M^{k-1-i}H\xi^i,
  \nonumber
  }
  combining it with \cref{error_con} gives that
  \eqe{
  \zeta^k \leq M^k\zeta^0 + \frac{1}{\gamma}\sum_{i=0}^{k-1}\gamma^{i+1}M^{k-1-i}H\xi^0.
  \nonumber
  }
  It follows that
  \eqe{
    \norm{\ml^k-\1\al^k} &= \mathcal{O}\lt(\max\lt(\rho(M), \beta\rt)^k\rt), \\
    \norm{\al^k-\lambda^*} &= \mathcal{O}\lt(\max\lt(\rho(M), \beta\rt)^k\rt),
    \nonumber
  }
  then we have
  \eqe{
    &\norm{\mx\+-\mx^*} \\
    \leq& \norm{\mx\+-\mx^*(\ml^k)} + \norm{\mx^*(\ml^k)-\mx^*} \\
    \leq& \gamma\+\delta^0 + \frac{\os(\mA)}{\mu}\lt(\norm{\ml^k-\1\al^k} + \sqrt{n}\norm{\al^k-\lambda^*}\rt) \\
    =& \mathcal{O}\lt(\max\lt(\rho(M), \gamma\rt)\+\rt).
  }

  Now we need to find an upper bound for $\rho(M)$. Let $a_1=\frac{\os(\mA)}{\sqrt{\mu}}$ and $a_2=\frac{\os(A)}{\sqrt{n\mu}}$, then the characteristic polynomial of $M$ is given as
  \eqe{
    p(x) = (x-\eta)\lt[xp_0(x)-(a_1a_2\beta)^3\sigma\rt]-(a_1a_2\beta)^3\sigma\eta,
    \nonumber
  }
  where
  $$p_0(x)=x^2-\lt(a_1^2\beta+2\sigma\rt)x-\lt(a_1^3a_2\beta^2\sigma+a_1^2\beta\sigma^2-\sigma^2\rt).$$

  Note that \cref{step_condition} guarantees that $\beta<\frac{\mu}{\os^2(\mA)}$, and it holds that
  $$\os(A) = \norm{A} = \norm{\lt(\1_n \otimes \mI_p\rt)\mA} \leq \sqrt{n}\os(\mA).$$
  Therefore, we have
  \eqe{
    a_2^2\beta \leq a_1a_2\beta \leq a_1^2\beta \leq 1,
  }
  which implies that the two roots of $p_0$ satisfy
  \eqe{
    \frac{1}{2}\lt(a_1^2\beta+2\sigma+\sqrt{(a_1^2\beta)^2+4(a_1^2\beta\sigma+a_1^3a_2\beta^2\sigma+a_1^2\beta\sigma^2)}\rt) \\
    < \sigma+3\sqrt{a_1^2\beta}.
    \nonumber
  }
  Consequently, it follows that
  \eqe{
    p_0(x) \geq \lt(x-\sigma-3\sqrt{a_1^2\beta}\rt)^2, \ \forall x \geq \sigma+3\sqrt{a_1^2\beta}.
  }

  Let
  \eqe{
    \hat{x} = \max\lt\{1-\frac{\beta\us^2(A)}{2nl}, \sigma+4\sqrt{a_1^2\beta}\sqrt{\frac{\os^2(\mA)nl}{\us^2(A)\mu}}\rt\},
  }
  note that $\frac{\os^2(\mA)nl}{\us^2(A)\mu} \geq 1$, then we have
  \eqe{
    \hat{x} \geq \sigma+4\sqrt{a_1^2\beta}\sqrt{\frac{\os^2(\mA)nl}{\us^2(A)\mu}} \geq \sigma+3\sqrt{a_1^2\beta} \geq 3a_1^2\beta,
  }
  and
  \eqe{
    p_0(\hat{x}) \geq a_1^2\beta\frac{\os^2(\mA)nl}{\us^2(A)\mu}.
  }
  Also note that $\beta<\frac{n\mu}{\os^2(A)}$, then we have $\eta = 1-\frac{\beta\us^2(A)}{nl} < 1$.
  It follows that
  \eqe{
    p(\hat{x}) \geq& \frac{\beta\us^2(A)}{2nl}\lt[\hat{x}p_0(\hat{x})-(a_1^2\beta)^3\rt]-(a_1^2\beta)^3 \\
    \geq& \frac{\beta\us^2(A)}{2nl}\lt[\frac{3(a_1^2\beta)^2\os^2(\mA)nl}{\us^2(A)\mu}-(a_1^2\beta)^3\rt]-(a_1^2\beta)^3 \\
    \geq& (a_1^2\beta)^3-(a_1^2\beta)^3 \\
    \geq& 0,
  }
  which implies that $p(x)$ is monotone increasing on $[\hat{x}, +\infty)$, hence all real roots of $p(x)$ lie in $(-\infty, \hat{x}]$. According to the Perron-Frobenius theorem, we know that $\rho(M)$ is an eigenvalue of $M$, hence $\rho(M) \leq \hat{x} = \max\lt\{1-\frac{\beta\us^2(A)}{2nl}, \sigma+4\sqrt{a_1^2\beta}\sqrt{\frac{\os^2(\mA)nl}{\us^2(A)\mu}}\rt\}$. Also note that $\rho(M) < 1$ if \cref{step_condition} holds, then the proof is completed.
\end{proof}

\begin{remark}
  As demonstrated in \cref{main_theorem}, iDDGT can achieve linear convergence over directed graphs without imposing any conditions on $A_i$ or $A$. In contrast, existing algorithms such as IDEA, DCPA, and NPGA require $A$ to have full row rank and are limited to undirected graphs. Thus, iDDGT has a much broader scope of application than these algorithms. Furthermore, in numerical experiments where $A$ has full row rank and the graph is undirected, iDDGT exhibits significantly faster convergence in terms of the number of gradient steps compared to NPGA, which is considered state-of-the-art.
\end{remark}

\begin{figure*}[tb]
  \begin{center}
    \includegraphics[scale=0.45]{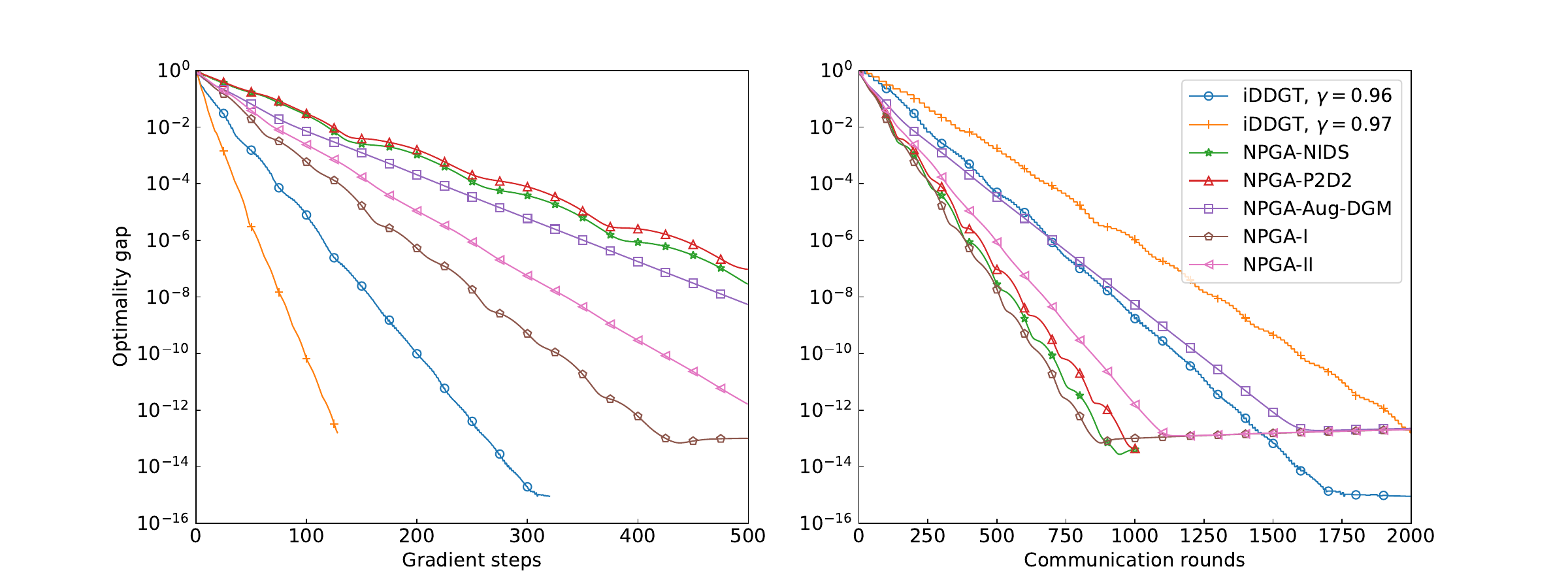}
    \caption {The result of Experiment II.}
    \label{fig2}
  \end{center}
\end{figure*}
\section{Numerical Experiments} \label{experiments}
In this section, we take two numerical experiments to validate the theoretical results and compare the performance of iDDGT with existing algorithms.

\subsection{Experiment I}
This experiment aims to validate \cref{main_theorem}, which states that iDDGT can achieve linear convergence for solving \cref{original_pro} over directed graphs, even if the matrix $A$ does not have full row rank. We consider the following instance of \cref{original_pro}:
\eqe{ \label{expt_pro}
  \min_{x_i \in \mR^{d_i}} \  & \sum_{i=1}^n \frac{1}{2}x_i\T P_ix_i + q_i\T x_i    \\
  \text{s.t.} \               & \sum_{i=1}^{n}A_ix_i = b,
}
where $P_i \in \mR^{d_i \times d_i}$ is a positive definite matrix. In this experiment, we choose $n=20$ and generate a directed exponential graph with $20$ nodes using a parameter $e = 4$. For a directed exponential graph, node $i$ can send messages to nodes $((i + 2^j) \mod n)$ for $j = 0, 1, \cdots, e$.
The matrix $P_i \in \mR^{2 \times 2}$ is randomly generated using the reverse process of diagonal decomposition, ensuring that their eigenvalues belong to the interval $[1, 10]$. Each element of fhe first $20$ rows of $A_i \in \mR^{100 \times 2}$ is independently sampled from a normal distribution with mean $0$ and variance $10$, while the remaining $80$ rows are generated by linearly combining the first $20$ rows. As a result, the row rank of the final matrix $A \in \mR^{100 \times 2}$ is $20$, indicating that it does not have full row rank. Each element of $q_i \in \mR^2$ and $b \in \mR^{100}$ is independently sampled from a standard normal distribution.

We compare the performances of different versions of iDDGT, with AGD chosen as the subproblem solver. The variations in iDDGT versions lie in the solving strategy of subproblems. One strategy involves controlling the solving error and decreasing the error linearly with respect to outer iterations, as described in \cref{main_theorem}. The other strategy involves using a fixed number of inner iterations. For example, "iDDGT, $\gamma=0.95$" denotes a version of iDDGT that uses the first strategy with an error decreasing rate of $0.95$, and "iDDGT, $s=1$" denotes a version that uses the second strategy with a fixed number of inner iterations set to $1$.

The experiment result is shown in \cref{fig1}, where the optimality gap is defined as $\frac{\|\mx^k-\mx^*\|}{\|\mx^0-\mx^*\|}$. Several observations can be made:
\begin{enumerate}
  \item The versions of iDDGT that adopt the first strategy demonstrate linear convergence, which confirms the validity of \cref{main_theorem}.
  \item Within a certain range, accelerating the reduction of subproblem solving errors in the first strategy can enhance communication efficiency. However, it may also lead to a potential decrease in computational efficiency. An important and counterintuitive finding is that solving the subproblem exactly (i.e., $\gamma = 0$) can result in both low computational efficiency and communication efficiency.
  \item Using single-step gradient descent as the subproblem solver can yield favorable communication efficiency. However, the computational efficiency is significantly inferior compared to the first strategy.
\end{enumerate}

\begin{remark}
  Though the fact that solving the subproblem exactly could result in both low computational efficiency and communication efficiency is counterintuitive, it can still be understood. The direct reason is that when the subproblem is solved inexactly, the value of $\beta$ could be much larger compared to when solving the subproblem exactly in experiments. An intuitive explanation for this is that when solving the subproblem inexactly, even if a larger value of $\beta$ is used (which could potentially lead to divergence if solving the subproblem exactly), $\mx\+$ would not be pulled very far from the convergent sequence. This is because only a few iterations are taken to solve the subproblem, which preserves the possibility of convergence.
\end{remark}

\subsection{Experiment II}
In this experiment, we continue to consider the optimization problem \cref{expt_pro} but with slightly different settings. As mentioned earlier, IDEA, DCPA, and NPGA can achieve linear convergence over undirected graphs under the condition that matrix $A$ has full row rank, with NPGA showing the best performance in numerical experiments \cite{li2022nested}. Therefore, in this comparison, we focus on evaluating the performances of iDDGT and NPGA. Since NPGA is an algorithmic framework with various variants, we have selected some of its best-performing variants, namely NPGA-NIDS, NPGA-P2D2, NPGA-Aug-DGM, NPGA-I, and NPGA-II.

To ensure a fair comparison between iDDGT and NPGA, we need to use the setting where the graph is undirected and matrix $A$ has full row rank. Specifically, we adopt the same settings and data as in Experiment I, with the exception of the graph, $A_i$, and $b$. The undirected graph with 20 nodes is generated using the Erdos-Renyi model \cite{erdos1960evolution} with a connectivity probability of 0.3. The elements of $A_i \in \mathbb{R}^{20 \times 2}$ and $b \in \mathbb{R}^{20}$ are randomly and independently sampled from normal distributions with mean $0$ and variance $10$ for $A_i$, and from the standard normal distribution for $b$. The resulting matrix $A$ is guaranteed to have full row rank.

The experiment result is shown in \cref{fig2}. We can observe that the convergence speed of iDDGT in terms of the number of gradient steps is much faster than that of NPGA, whereas its convergence speed in terms of the number of communication rounds is slower compared to NPGA. Therefore, iDDGT woule be a better choice when computation is expensive but communication is cheap.

\section{Conclusion} \label{conclusion}
In this work, we have presented iDDGT, an inexact decentralized dual gradient tracking method for distributed optimization problems with a globally coupled equality constraint. By utilizing an inexact dual gradient with controllable inexactness, iDDGT offers significant computational efficiency advantages over existing algorithms.
Another key contribution of iDDGT is its ability to achieve linear convergence over directed graphs without imposing any conditions on the constraint matrix. This significantly broadens the scope of its applicability compared to existing algorithms that require the constraint matrix to have full row rank and undirected graphs for linear convergence.

Overall, iDDGT offers a promising approach for solving constraint-coupled optimization problems. Its ability to achieve linear convergence, computational efficiency, and flexibility make it a valuable tool for a wide range of applications. Future research can focus on extending iDDGT to handle more complex constraints and exploring adaptive inexactness control.

\bibliographystyle{ieeetr}
\bibliography{../public/bib}

\begin{appendix} \label{appendix}
  \begin{appendix_proof}[\cref{average}] Note that
    \eqe{
      &\norm{B - \1\frac{1}{n}\sum_{i=1}^n B_i}^2 \\
      =& \norm{B}^2 + \norm{\1\frac{1}{n}\sum_{i=1}^n B_i}^2 - 2\sum_{i=1}^nB_i\T\lt(\frac{1}{n}\sum_{i=1}^n B_i\rt) \\
      =& \norm{B}^2 + n\norm{\frac{1}{n}\sum_{i=1}^n B_i}^2 - 2n\norm{\frac{1}{n}\sum_{i=1}^n B_i}^2, \\
      \leq& \norm{B}^2,
    }
    where completes the proof.
  \end{appendix_proof}

  \begin{appendix_proof}[\cref{dual_property}]
    Note that
    \eqe{
      \phi(\lambda) =& \inf_{\mx \in \mR^d} f(\mx) + \lambda\T(A\mx - b) \\
      =& -f^*(-A\T\lambda) - \lambda\T b,
    }
    then we have
    \eqe{ \label{1726}
      &\dotprod{\nabla \phi(\lambda_1)-\nabla \phi(\lambda_2), \lambda_1-\lambda_2} \\
      =& \dotprod{\nabla f^*(-A\T\lambda_1)-\nabla f^*(-A\T\lambda_2), A\T\lt(\lambda_1-\lambda_2\rt)}. \\
    }
    As mentioned before, \cref{convex} implies that $f$ is $\mu$-strongly convex and $l$-smooth, hence $f^*$ is $\frac{1}{l}$-strongly convex and $\frac{1}{\mu}$-smooth. Applying the smoothness of $f^*$ to \cref{1726} gives that
    \eqe{
      \dotprod{\nabla \phi(\lambda_1)-\nabla \phi(\lambda_2), \lambda_1-\lambda_2} \geq& -\frac{1}{\mu}\norm{A\T\lt(\lambda_1-\lambda_2\rt)}^2 \\
      \geq& -\frac{\os^2(A)}{\mu}\norm{\lambda_1-\lambda_2}^2,
    }
    hence $\phi(\lambda)$ is $\frac{\os^2(A)}{\mu}$-smooth. Applying the strong convexity of $f^*$ to \cref{1726} gives that
    \eqe{
      \dotprod{\nabla \phi(\lambda_1)-\nabla \phi(\lambda_2), \lambda_1-\lambda_2}  \leq& -\frac{1}{l}\norm{A\T\lt(\lambda_1-\lambda_2\rt)}^2 \\
      \leq& -\frac{\us^2(A)}{l}\norm{\lambda_1-\lambda_2}^2,
    }
    which the second inequality holds if $\lambda_1, \lambda_2 \in \Col(A)$. Therefore, $\phi(\lambda)$ is $\frac{\us^2(A)}{l}$-strongly concave over $\Col(A)$, which completes the proof.
  \end{appendix_proof}

  \begin{appendix_proof}[\cref{contraction}]
    Let $\mx^*$ be the solution of \cref{mP}, it holds that $A\mx^* = b$, then we have
    \eqe{
      \lambda\+ &= \lambda^k + \alpha A\lt(\mx\+ - \mx^*\rt).
      \nonumber
    }
    Recall that $\lambda^0 = 0$, hence $\lambda^k \in \Col(A), \ \forall k \geq 0$, which implies that we can use the strong concaveness of $\phi$ for them.

    The following proof is borrowed from \cite[Lemma 10]{qu2017harnessing}, where the contraction property of the gradient method is studied.
    We first consider the case $0 < \alpha \leq \frac{2}{\frac{\os^2(A)}{\mu} + \frac{\us^2(A)}{l}}$. Let $\mu' = \frac{\us^2(A)}{l}$ and $l' = \frac{2}{\alpha} - \frac{\us^2(A)}{l} \geq \frac{\os^2(A)}{\mu}$, then $\phi$ is also $\mu'$-strongly concave over $\Col(A)$ and $l'$-smooth.
    It follows that
    \eqe{
      &\norm{\lambda\+ - \lambda^*_c}^2 \\
      =& \norm{\lambda^k + \alpha\nabla\phi(\lambda^k) - \lt(\lambda^*_c + \alpha\nabla\phi(\lambda^*_c)\rt)}^2 \\
      =& \norm{\lambda^k-\lambda^*_c}^2 + 2\alpha\dotprod{\nabla\phi(\lambda^k)-\nabla\phi(\lambda^*_c), \lambda^k-\lambda^*_c} \\
      &+ \alpha^2\norm{\nabla\phi(\lambda^k)-\nabla\phi(\lambda^*_c)} \\
      \leq& \lt(1-2\alpha\frac{\mu'l'}{\mu'+l'}\rt)\norm{\lambda^k-\lambda^*_c}^2 \\
      &+ \alpha\lt(\alpha-\frac{2}{\mu'+l'}\rt)\norm{\nabla\phi(\lambda^k)-\nabla\phi(\lambda^*_c)} \\
      =& \lt(1-\frac{\alpha\us^2(A)}{l}\rt)^2\norm{\lambda^k-\lambda^*_c}^2 \\
      =& \eta^2\norm{\lambda^k-\lambda^*_c}^2,
    }
    where the inequality holds because of \cite[Lemma 3.11]{bubeck2015convex} and the last equality holds due to $\frac{\us^2(A)}{l} \leq \frac{\os^2(A)}{\mu}$ and $0 < \alpha \leq \frac{2}{\frac{\os^2(A)}{\mu} + \frac{\us^2(A)}{l}}$.
    The case $\frac{2}{\frac{\os^2(A)}{\mu} + \frac{\us^2(A)}{l}} < \alpha < \frac{2\mu}{\os^2(A)}$ can proved in a similar way with $\mu' = \frac{2}{\alpha} - \frac{\os^2(A)}{\mu}$ and $l' = \frac{\os^2(A)}{\mu}$.
  \end{appendix_proof}

  \begin{appendix_proof}[\cref{average_z}]
    Note that $z_i^0 = A_i(\mx^0_i) - \frac{1}{n}b$ implies that $\az^0 = \frac{1}{n}(A\mx^0-b)$, then applying the double stochasticity of $\mW$ to $\cref{id2gt_z}$ gives that
    \eqe{
      \az\+ = \az^k + \frac{1}{n}A\lt(\mx\+-\mx^k\rt) = \frac{1}{n}(A\mx\+-b),
      \nonumber
    }
    it follows that
    \eqe{
      \al\+ &= \al^k + \beta\az\+ = \al^k + \frac{\beta}{n}(A\mx\+-b),
      \nonumber
    }
    which completes the proof.
  \end{appendix_proof}

  \begin{appendix_proof}[\cref{key_lemma}]
    Recall that $f$ is $\mu$-strongly convex, according to \cref{convex_property}, we have
    \eqe{ \label{1734}
      &\norm{\mx^*(\ml^k)-\mx^*(\ml^{k-1})} \\
      \leq& \frac{1}{\mu}\norm{\nabla f(\mx^*(\ml^k)) - \nabla f(\mx^*(\ml^{k-1}))} \\
      =& \frac{1}{\mu}\norm{\mA\T(\ml^k-\ml\p)} \\
      \leq& \frac{\os(\mA)}{\mu}\norm{\ml^k-\ml\p},
    }
    where the equality is obtained by applying the first-order optimality condition to $\mx^*(\ml^k)$ and $\mx^*(\ml^{k-1})$.
    It follows that
    \eqe{ \label{1146}
      &\norm{\mx\+-\mx^k} \\
      \leq& \norm{\mx\+-\mx^*(\ml^k)} + \norm{\mx^k-\mx^*(\ml^{k-1})} \\
      &+ \norm{\mx^*(\ml^k)-\mx^*(\ml^{k-1})} \\
      \leq& \frac{\os(\mA)}{\mu}\norm{\ml^k-\ml\p} + \delta\+ + \delta^k,
    }
    where the last inequality holds due to \cref{inner_error}.

    According to \cref{id2gt_z,average_z}, we have
    \eqe{ \label{ez}
      &\norm{\mz\+-\1\az\+} \\
      =& \norm{\mW\mz^k-\1\az^k + \lt(\mA-\1 \frac{1}{n}A\rt)(\mx\+-\mx^k)} \\
      \leq& \sigma\norm{\mz^k-\1\az^k} + \os(\mA)\norm{\mx\+-\mx^k} \\
      \leq& \sigma\norm{\mz^k-\1\az^k} + \frac{\os^2(\mA)}{\mu}\norm{\ml^k-\ml\p} \\
      &+ \os(\mA)\lt(\delta\+ + \delta^k\rt),
    }
    where the first inequality holds due to \cref{average,consensus} and the last one follows from \cref{1146}.

    According to \cref{id2gt_l}, \cref{consensus}, and \cref{ez}, we have
    \eqe{ \label{el}
      &\norm{\ml\+-\1\al\+} \\
      =& \norm{\mW\ml^k-\1\al^k + \beta(\mW\mz\+-\1\az\+)} \\
      \leq& \sigma\norm{\ml^k-\1\al^k} + \beta\sigma\norm{\mz\+-\1\az\+} \\
      \leq& \sigma\norm{\ml^k-\1\al^k} + \beta\sigma^2\norm{\mz^k-\1\az^k} \\
      &+ \frac{\beta\sigma\os^2(\mA)}{\mu}\norm{\ml^k-\ml\p}+ \beta\sigma\os(\mA)\lt(\delta\+ + \delta^k\rt).
    }

    Note that $\ax\+ = \mx^*\lt(\1\al^k\rt)$, similar to \cref{1734}, we have
    \eqe{
      \norm{\mx^*(\ml^k)-\ax\+} \leq \frac{\os(\mA)}{\mu}\norm{\ml^k-\1\al^k},
      \nonumber
    }
    it follows that
    \eqe{ \label{2241}
      &\norm{\az\+ - \frac{1}{n}(A\ax\+-b)} \\
      =& \frac{1}{n}\norm{A\lt(\mx\+-\mx^*(\ml^k) + \mx^*(\ml^k)-\ax\+\rt)} \\
      \leq& \frac{\os(A)\os(\mA)}{n\mu}\norm{\ml^k-\1\al^k} + \frac{\os(A)}{n}\delta\+,
    }
    where the equality holds due to \cref{average_z}.
    Let $\mx^*$ be the solution of \cref{mP}, it holds that
    \eqe{
      \mx^* &= \arg\min_{\mx \in \mR^{d}}\lt\{f(\mx) + \lambda^{*\T}(A\mx - b)\rt\} \\
      &= \mx^*(\1\lambda^*),
      \nonumber
    }
    then we have
    \eqe{ \label{2242}
      \norm{\ax\+-\mx^*} \leq&  \frac{1}{\mu}\norm{\mA\T\1\lt(\al^k-\lambda^*\rt)} \\
      \leq& \frac{\os(A)}{\mu}\norm{\al^k-\lambda^*}.
    }
    With \cref{{2241,2242}}, we can obtain that
    \eqe{
      &\norm{\mz\+} \\
      =& \bigg\|\mz\+-\1\az\+ + \1\az\+ - \1\frac{1}{n}(A\ax\+-b) \\
      &+ \1\frac{1}{n}A(\ax\+-\mx^*)\bigg\| \\
      \leq& \norm{\mz\+-\1\az\+} + \sqrt{n}\norm{\az\+ - \frac{1}{n}(A\ax\+-b)} \\
      &+ \frac{\os(A)}{\sqrt{n}}\norm{\ax\+-\mx^*} \\
      \leq& \sigma\norm{\mz^k-\1\az^k} + \frac{\os^2(\mA)}{\mu}\norm{\ml^k-\ml\p} \\
      &+ \frac{\os(A)\os(\mA)}{\sqrt{n}\mu}\norm{\ml^k-\1\al^k} + \frac{\os^2(A)}{\sqrt{n}\mu}\norm{\al^k-\lambda^*} \\
      &+ \lt(\os(\mA)+\frac{\os(A)}{\sqrt{n}}\rt)\delta\+ + \os(\mA)\delta^k,
    }
    it follows that
    \eqe{ \label{en}
      &\norm{\ml\+-\ml^k} \\
      =& \norm{(\mW-\mI)(\ml^k-\1\al^k) + \beta\mW\mz\+} \\
      \leq& (1+\sigma)\norm{\ml^k-\1\al^k} + \beta\norm{\mz\+} \\
      \leq& \lt(1+\sigma+\frac{\beta\os(A)\os(\mA)}{\sqrt{n}\mu}\rt)\norm{\ml^k-\1\al^k} \\
      &+ \beta\sigma\norm{\mz^k-\1\az^k} + \frac{\beta\os^2(\mA)}{\mu}\norm{\ml^k-\ml\p} \\
      &+ \frac{\beta\os^2(A)}{\sqrt{n}\mu}\norm{\al^k-\lambda^*} + \beta\os(\mA)\delta^k \\
      &+ \beta\lt(\os(\mA)+\frac{\os(A)}{\sqrt{n}}\rt)\delta\+.
    }

    Recall that $\ml^0 = 0$ (which implies that $\al^0 = 0$) and $0 < \beta < \frac{2n\mu}{\os^2(A)}$, also note that
    \eqe{
      \ax\+ &= \arg\min_{\mx \in \mR^{d}}\lt\{f(\mx) + \lt(\1\al^k\rt)\T\lt(\mA\mx-\mb\rt)\rt\} \\
      &= \arg\min_{\mx \in \mR^{d}}\lt\{f(\mx) + \al^{k\T}(A\mx - b)\rt\},
      \nonumber
    }
    then applying \cref{average} and \cref{contraction} gives that
    \eqe{ \label{2310}
      \norm{\al^k + \frac{\beta}{n}(A\ax\+-b) - \lambda^*_c} \leq \nu\norm{\al^k - \lambda^*_c}, \ \forall k \geq 0,
    }
    where $\nu = \max\lt\{\lt|1-\frac{\beta\os^2(A)}{n\mu}\rt|, \lt|1-\frac{\beta\us^2(A)}{nl}\rt|\rt\} \in (0, 1)$. With \cref{2310,average_z}, we can obtain that
    \eqe{ \label{eo}
      &\norm{\al\+-\lambda^*} \\
      \leq& \norm{\al^k+\frac{\beta}{n}(A\ax\+-b) -\lambda^*} + \frac{\beta}{n}\norm{A(\mx^*(\ml^k)-\ax\+)} \\
      &+ \frac{\beta}{n}\norm{A(\mx\+-\mx^*(\ml^k))} \\
      \leq& \nu\norm{\al^k-\lambda^*} + \frac{\beta\os(A)\os(\mA)}{n\mu}\norm{\ml^k-\1\al^k} + \frac{\beta\os(A)}{n}\delta\+.
    }

    Recall the definitions of $M$ and $H$, according to \cref{ez,el,en,eo}, we have
    \eqe{ \label{2307}
      \zeta\+ \leq M\zeta^k + H\xi^k,
    }
    combining \cref{2307} with $k=0, \cdots, k-1$ completes the proof.
  \end{appendix_proof}

  \begin{appendix_proof}[\cref{inner_iteration}]
    At $k$-th outer iteration, $g(x) = f_i(x) + \lambda_i^{k\T}\lt(A_ix-\frac{1}{n}b\rt)$, which is $\mu_i$-strongly convex and $l_i$-smooth.
    According to \cite[Section 2.2.1]{nesterov2018lectures}, we have
    \eqe{
    g(x^k) - g^* \leq \frac{l_i+\mu_i}{2}\norm{x_i^{k+1, 0}-x^*}^2e^{-k\sqrt{\frac{\mu_i}{l_i}}},
    \nonumber
    }
    it follows that
    \eqe{
    \norm{x^k-x^*}^2 \leq \frac{l_i+\mu_i}{\mu^3}\norm{\nabla f_i\lt(x_i^{k+1, 0}\rt)}^2e^{-k\sqrt{\frac{\mu_i}{l_i}}},
    \nonumber
    }
    then \cref{inner_error_i} can be guaranteed by letting
    \eqe{
    \frac{l_i+\mu_i}{\mu^3}\norm{\nabla f_i\lt(x_i^{k+1, 0}\rt)}^2e^{-k\sqrt{\frac{\mu_i}{l_i}}} \leq \frac{(\delta\+)^2}{n},
    \nonumber
    }
    which completes the proof.
  \end{appendix_proof}
\end{appendix}
\end{document}